\documentclass{amsart}
\usepackage{pifont}
\usepackage{amsfonts}

\usepackage{amscd,amssymb,amsmath,graphicx,verbatim}
\usepackage[dvips]{hyperref}
\usepackage[TS1,OT1,T1]{fontenc}

\newtheorem{theorem}{Theorem}[section]
\newtheorem{lemma}[theorem]{Lemma}

\theoremstyle{definition}
\newtheorem{definition}[theorem]{Definition}

\newtheorem{question}[theorem]{Question}

\theoremstyle{remark}
\newtheorem{remark}[theorem]{Remark}

\numberwithin{equation}{section}

\begin{document}

\title{Limits of J-class operators}
\author{Geng Tian}
\address{Geng Tian, Department of Mathematics , Jilin university, 130012, Changchun, P.R.China} \email{tiangeng09@mails.jlu.edu.cn}

\author{Bingzhe Hou}
\address{Bingzhe Hou, Department of Mathematics , Jilin university, 130012, Changchun, P.R.China} \email{houbz@jlu.edu.cn}

\date{Oct. 15, 2010}
\subjclass[2000]{Primary 47A55, 47A53, 47A16; Secondary 54H20,
37B99.} \keywords{J-class operator, spectrum, Cowen-Douglas
operators.}
\thanks{This work is supported by the National Nature
Science Foundation of China (Grant No. 11001099).}
\begin{abstract}
The purpose of the present work is to answer an open problem which
is raised by G.Costakis and A.Manoussos in their paper "J-class
operators and hypercyclicity " accepted by J. Operator Theory. More
precisely, we give the spectral description of the closure of the
set of J-class operators acting on a separable Hilbert space.
\end{abstract}
\maketitle

\section{Introduction and preliminaries}

In article \cite{G.Costakis and A.Manoussos}, G.Costakis and
A.Manoussos considered  J-class operators and  raised five open
problems. We are interested in  the third problem.
\begin{question}
Is there a spectral description of the closure of the set of J-class
operators acting on a Hilbert space?
\end{question}
In this paper, we shall use the classical approximation tools to
answer this problem.

First, let us recall some definitions and notations about J-class
operators.

Let $X$ be a complex Banach space. Denote by $B(X)$ the set of all
bounded linear operators acting on $X$. Choose any operator $T\in
B(X)$ and any subset $U$ of $X$. The symbol $Orb(T,U)$ denotes the
orbits of $U$ under $T$, i.e. $Orb(T,U)=\{T^nx:~x\in
U,~n=0,1,2,\ldots\}$. If $U=\{x\}$ is a singleton and the orbit
$Orb(T,x)$ is dense in $X$, the operator is called hypercyclic and
the vector $x$ is a hypercyclic vector for $T$.  In paper
\cite{G.Costakis and A.Manoussos}, they somehow "localize" the
notion of hypercyclicity by introducing certain sets, which is
called J-sets. The notion of J-sets is well known in the theory of
topological dynamics, see \cite{BS}. Roughly speaking, if $x$ is a
vector in $X$ and $T$ an operator, then the corresponding J-sets of
$x$ under $T$ describes the asymptotic behavior of all vectors
nearby $x$. To be precise, for a given vector $x\in X$, define
\begin{eqnarray*}J(x)&=&\{y\in X: ~{\rm there~ exist ~a~ strictly~
increasing~ sequence ~of~ positive}\\
&&{\rm integers}~\{k_n\}{\rm ~and~ a ~sequence}~
\{x_n\}\subseteq X {\rm ~such~ that}~x_n\rightarrow x ~{\rm and}\\
&&T^{k_n}(x_n)\rightarrow y\}.
\end{eqnarray*}

\begin{definition}
An operator $T:X\rightarrow X$ is called a J-class operator if there
exists a non-zero vector $x\in X$ so that $J(x)=X$. In this case $x$
will be called a J-class vector for $T$.
\end{definition}

\begin{remark}
If $T$ is a J-class operator, then for any invertible operator $C$,
$C^{-1}TC$ is also a J-class operator.
\end{remark}

In fact it is not difficult to see that if $T$ is hypercyclic then
$T$ is a J-class operator. But the converse implication is not true
in general. See \cite{G.Costakis and A.Manoussos} for examples.

Next we will introduce some notations and properties of Hilbert
space operators. Let $H$ be complex separable Hilbert space and
denote by $B(H)$ the set of bounded linear operators $T:H\rightarrow
H$. For $T\in B(H)$, denote the kernel of $T$ and the range of $T$
by Ker$T$ and Ran$T$ respectively. Denote by $\sigma(T)$,
$\sigma_p(T)$, $\sigma_e(T)$ and $\sigma_{lre}(T)$ the spectrum, the
point spectrum, the essential spectrum and the Wolf spectrum  of $T$
respectively. For $\lambda\in \rho_{s-F}(T):= \mathbb{C}\backslash
\sigma_{lre}(T)$, ${\rm ind}(\lambda-T)={\rm dimKer}(\lambda-T)-{\rm
dimKer}(\lambda-T)^*$, $\mbox{min
ind}(\lambda-T)=\mbox{min}\{\mbox{dimKer}(\lambda-T),
\mbox{dimKer}(\lambda-T)^*\}$. Denote
$\rho_{s-F}^{(n)}(T)=\{\lambda\in\rho_{s-F}(T);~\text{ind}(\lambda-T)=n\}$,
where $-\infty \leq n\leq \infty$,
$\rho_{s-F}^{(+)}(T)=\{\lambda\in\rho_{s-F}(T);~\text{ind}(\lambda-T)>0\}$
and
$\rho_{s-F}^{(-)}(T)=\{\lambda\in\rho_{s-F}(T);~\text{ind}(\lambda-T)<0\}$.
 Denote by $\sigma_0(T)$ the set of isolated
points of $\sigma(T)\backslash\sigma_e(T)$. Denote by $\overline{E}$
the closure of a set $E$.

\section{Closure of the set of J-class operators}

In this section, we will prove our main result. First, let us
introduce some auxiliary lemmas.

\begin{lemma}\label{1}
Let $T\in B(H)$. If $\sigma_p(T^*)\cap\mathbb{D}\neq\emptyset$,
where $\mathbb{D}$ is the open unit disk, then $T$ can not be a
J-class operator.
\end{lemma}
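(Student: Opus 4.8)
The plan is to manufacture a single bounded linear functional on $H$ that annihilates $J(x)$ for \emph{every} vector $x$. Since a non-zero functional has a proper kernel, this immediately forces $J(x)\neq H$ for all $x$, and hence $T$ cannot be a J-class operator. The functional will come directly from the hypothesis: an eigenvector of $T^{*}$ for an eigenvalue inside the open unit disk.

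Concretely, suppose $\lambda\in\sigma_p(T^*)\cap\mathbb{D}$ and pick a non-zero $v\in H$ with $T^{*}v=\lambda v$, so that $(T^{*})^{k}v=\lambda^{k}v$ for all $k\geq 0$. Fix an arbitrary $x\in H$ and an arbitrary $z\in J(x)$, and take the strictly increasing sequence $\{k_n\}$ and the sequence $x_n\to x$ with $T^{k_n}x_n\to z$ provided by the definition of $J(x)$. The core computation is the adjoint identity
\[
\langle T^{k_n}x_n,\,v\rangle=\langle x_n,\,(T^{*})^{k_n}v\rangle=\overline{\lambda^{k_n}}\,\langle x_n,\,v\rangle .
\]
Because $\{x_n\}$ converges it is bounded, so $|\langle x_n,v\rangle|\leq M$ for some constant $M$; and $|\overline{\lambda^{k_n}}|=|\lambda|^{k_n}\to 0$ since $|\lambda|<1$ and $k_n\to\infty$. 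Hence $\langle T^{k_n}x_n,v\rangle\to 0$, while continuity of the inner product gives $\langle T^{k_n}x_n,v\rangle\to\langle z,v\rangle$; therefore $\langle z,v\rangle=0$.

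This shows $J(x)\subseteq\{v\}^{\perp}$ for every $x\in H$, and $\{v\}^{\perp}$ is a proper closed subspace because $v\neq 0$. In particular $J(x)\neq H$ for every non-zero $x$, so by definition $T$ is not a J-class operator. As for difficulties: there is essentially no deep obstacle here — the whole argument is the adjoint identity together with $|\lambda|^{k_n}\to 0$. The only points requiring a little care are invoking boundedness of the convergent sequence $\{x_n\}$ to control $\langle x_n,v\rangle$, and applying the eigenvalue relation to $(T^*)^{k_n}$ rather than to $T^{k_n}$ itself. It is worth noting that the argument in fact proves a slightly stronger statement: whenever $T^{*}$ has an eigenvalue in the open unit disk, all J-sets of $T$ are contained in one fixed hyperplane of $H$.
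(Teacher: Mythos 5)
Your proof is correct and is essentially the paper's argument in dual form: the paper decomposes $H=\overline{\mathrm{ran}(T-\overline{\lambda})}\oplus\ker(T^*-\lambda)$, writes $T$ as an upper-triangular operator matrix whose lower-right corner is $\overline{\lambda}I$, and derives a contradiction from the bottom component $\overline{\lambda}^{k_n}y_n\to z\neq 0$ with $\{y_n\}$ convergent, whereas you pair directly against the eigenvector $v$, which amounts to composing that same projection onto $\ker(T^*-\lambda)$ with an inner product. Both hinge on the identical estimate $|\lambda|^{k_n}\to 0$ against a bounded sequence, so this is the same approach; your packaging has the minor bonus of exhibiting a fixed hyperplane $\{v\}^{\perp}$ containing every $J(x)$ instead of arguing by contradiction.
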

\begin{proof}
If not, suppose $T$ is a J-class operator.

Since $\sigma_p(T^*)\cap\mathbb{D}\neq\emptyset$, choose
$\lambda\in\sigma_p(T^*)\cap\mathbb{D}$, we have ${\rm
dim}ran(T-\overline{\lambda})^\bot={\rm dim}Ker(T^*-\lambda)>0$.

Moreover, we have
$$T-\overline{\lambda}=\begin{matrix}\begin{bmatrix}
\widetilde{T}&A\\
0&0\\
\end{bmatrix}&
\begin{matrix}
\overline{ran(T-\overline{\lambda})}\\
ran(T-\overline{\lambda})^\bot\end{matrix}\end{matrix},$$ i.e.
$$T=\begin{matrix}\begin{bmatrix}
\overline{\lambda}+\widetilde{T}&A\\
0&\overline{\lambda}\\
\end{bmatrix}&
\begin{matrix}
\overline{ran(T-\overline{\lambda})}\\
ran(T-\overline{\lambda})^\bot\end{matrix}\end{matrix}.$$

Because $T$ is a J-class operator, there exists a nonzero vector
$x\oplus y,~x\in\overline{ran(T-\overline{\lambda})},~y\in
ran(T-\overline{\lambda})^\bot$, such that $J(x\oplus y)=H$, i.e.
for any $z\in H$, there exists a strictly increasing sequence of
positive integers $\{k_n\}$ and a sequence $\{x_n\oplus y_n\}$ such
that $\lim\limits_{n\rightarrow\infty}x_n\oplus y_n=x\oplus y$ and
$\lim\limits_{n\rightarrow\infty}T^{k_n}(x_n\oplus y_n)=z$.

Choose a nonzero vector $z\in ran(T-\overline{\lambda})^\bot$, then
from
\begin{eqnarray*}
z\leftarrow T^{k_n}(x_n\oplus
y_n)=[(\overline{\lambda}+\widetilde{T})^{k_n}(x_n)+\sum\limits_{i+j=k_n-1;~i,j\geq0}(\overline{\lambda}+\widetilde{T})^i\overline{\lambda}^jA(y_n)]\oplus\overline{\lambda}^{k_n}y_n,
\end{eqnarray*}
we have
$\lim\limits_{n\rightarrow\infty}\overline{\lambda}^{k_n}y_n=z$.

It follows from $|\lambda|<1$ that ${\rm sup}_n||y_n||=\infty$,
contradict to $\lim\limits_{n\rightarrow\infty}y_n=y$.

Hence $T$ can not be a J-class operator.
\end{proof}

The definition given by Cowen and Douglas \cite{Cowen} is well known
as follows.
\begin{definition}
For $\Omega$ a connected open subset of $\mathbb{C}$ and $n$ a
positive integer, let $B_{n}(\Omega)$ denote the operators $T$ in
$B(H)$ which satisfy:

(1) $\Omega \subseteq \sigma(T)$;

(2) ${\rm ran}(T-\omega)=H \ for \ \omega \ in \ \Omega$;

(3) $\bigvee _{\omega\in \Omega}{\rm ker}(T-\omega)=H$; and

(4) ${\rm dimker}(T-\omega)=n$ for $\omega$ in $\Omega$.
\end{definition}

One often calls the operator $T$ in $B_{n}(\Omega)$ Cowen-Douglas
operator. Denote by $\partial\mathbb{D}$ the boundary of unit open
disk. Then we have the following theorem [\cite{Hbz}, Theorem 3.7].

\begin{theorem}\label{4}
Let $T\in B_n(\Omega)$. If
$\Omega\cap\partial\mathbb{D}\neq\emptyset$, then $T$ is strongly
mixing and hence hypercyclic.
\end{theorem}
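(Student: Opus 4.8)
The plan is to derive the theorem from the Godefroy--Shapiro eigenvalue criterion: if the eigenvectors of $T$ for eigenvalues in the open unit disk $\mathbb{D}$ span a dense subspace of $H$, and the eigenvectors for eigenvalues of modulus $>1$ also span a dense subspace, then $T$ is topologically mixing, and in particular hypercyclic. Thus the entire task reduces to producing sufficiently many eigenvectors on both sides of the unit circle, and for a Cowen--Douglas operator eigenvectors are exactly what is available in abundance.

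First I would record an elementary topological observation. Since $\Omega$ is open and $\Omega\cap\partial\mathbb{D}\neq\emptyset$, fix $\omega_0\in\Omega\cap\partial\mathbb{D}$; any small disk about $\omega_0$ contained in $\Omega$ meets both $\mathbb{D}$ and $\{|\lambda|>1\}$, so the sets $\Omega_-:=\Omega\cap\mathbb{D}$ and $\Omega_+:=\Omega\cap\{|\lambda|>1\}$ are nonempty open subsets of $\Omega$.

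The heart of the argument is the following refinement of condition (3) in the definition of $B_n(\Omega)$: for \emph{every} nonempty open $U\subseteq\Omega$ one still has $\bigvee_{\omega\in U}\ker(T-\omega)=H$ (equivalently, $T\in B_n(U)$). To see this, use the Cowen--Douglas holomorphic eigenvector bundle: over a small ball $\Omega_0\subseteq\Omega$ there is a holomorphic frame $e_1,\dots,e_n\colon\Omega_0\to H$ such that $\{e_1(\omega),\dots,e_n(\omega)\}$ is a basis of $\ker(T-\omega)$ for each $\omega\in\Omega_0$ (here conditions (2) and (4) are what guarantee the semi-Fredholm, constant-rank structure that makes the bundle holomorphic). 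If a vector $v\in H$ satisfies $v\perp\ker(T-\omega)$ for all $\omega$ in some nonempty open set, then each function $\omega\mapsto\langle e_i(\omega),v\rangle$ depends holomorphically on $\omega$ and vanishes on an open set; since $\Omega$ is connected, chaining overlapping local frames and applying the identity theorem forces these functions to vanish throughout $\Omega$. Hence $v\perp\ker(T-\omega)$ for every $\omega\in\Omega$, so $v\perp\bigvee_{\omega\in\Omega}\ker(T-\omega)=H$ by (3), i.e.\ $v=0$. This proves the claim.

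Applying the claim with $U=\Omega_-$ and with $U=\Omega_+$, the closed span of the eigenvectors of $T$ with eigenvalue in $\mathbb{D}$ equals $H$, and likewise for the eigenvectors with eigenvalue of modulus $>1$. Now I would finish either by quoting the Godefroy--Shapiro theorem directly, or by reproving the mixing statement: given nonempty open $U',V'\subseteq H$, choose $u\in U'$ of the form $u=\sum_i a_i x_i$ with $Tx_i=\lambda_i x_i$, $|\lambda_i|<1$, and $v\in V'$ of the form $v=\sum_j b_j y_j$ with $Ty_j=\mu_j y_j$, $|\mu_j|>1$ (possible by the two density statements), and set $w_n=\sum_j b_j\mu_j^{-n}y_j$. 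Then $w_n\to 0$, so $u+w_n\to u$, while $T^n(u+w_n)=\sum_i a_i\lambda_i^n x_i+v\to v$; hence for all large $n$, $u+w_n\in U'$ and $T^n(u+w_n)\in V'$, so $T^n(U')\cap V'\neq\emptyset$ for all large $n$. Thus $T$ is topologically mixing and, in particular, hypercyclic. The main obstacle is the middle step: one must ensure that the local holomorphic frames of the eigenvector bundle patch together well enough to run a genuine global identity-theorem (analytic continuation) argument on the connected set $\Omega$; once this "analytic continuation of orthogonality'' is in hand, the rest is routine.
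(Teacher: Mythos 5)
The paper offers no proof of this statement---it is quoted verbatim from \cite{Hbz} (Theorem 3.7)---and your argument (local holomorphic frames for the Cowen--Douglas eigenvector bundle, the identity theorem to upgrade condition (3) to density of $\bigvee_{\omega\in U}\ker(T-\omega)$ for every nonempty open $U\subseteq\Omega$, then the Godefroy--Shapiro eigenvalue criterion applied to $\Omega\cap\mathbb{D}$ and $\Omega\cap\{|\lambda|>1\}$) is essentially the proof given in that reference, and it is correct. The only caveat is terminological: what you establish is topological mixing, which is the sense of ``strongly mixing'' intended here (the strong counterpart of weak mixing), and in any case hypercyclicity is all that the present paper extracts from this theorem when it is combined with Lemma~\ref{2}.
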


\begin{remark}\label{re}
In fact, an extended case of this result, for $n=+\infty$, can be
obtained with the same argument.
\end{remark}

The following lemmas are taken from article \cite{G.Costakis and
A.Manoussos}.

\begin{lemma}\label{2}
Let $X$ be a Banach space and let $Y$ be a separable Banach space.
Consider an operator $S:X\rightarrow X$ so that
$\sigma(S)\subseteq\{\lambda;|\lambda|>1\}$. Let also
$T:Y\rightarrow Y$ be a hypercyclic operator. Then $S\oplus T:
X\oplus Y\rightarrow X\oplus Y$ is a J-class operator but not a
hypercyclic operator.
\end{lemma}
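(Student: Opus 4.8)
The plan is to prove the two assertions of Lemma~\ref{2} separately, starting with the easier negative statement. Since $\sigma(S)\subseteq\{\lambda:|\lambda|>1\}$, the operator $S$ is invertible and $\|S^{-n}\|\to 0$; in particular $\|S^{-n}x\|\to\infty$ for every nonzero $x\in X$, because $\|x\|=\|S^n S^{-n}x\|\le\|S^n\|\,\|S^{-n}x\|$ and $\|S^n\|$ grows at most geometrically while, by the spectral radius formula applied to $S^{-1}$, $\|S^n\|^{1/n}\to r(S)>1$ forces the lower bound $\|S^n x\|\ge c\rho^n\|x\|$ for some $\rho>1$ and all large $n$. First I would record this growth estimate for $S$, since it drives everything.

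For the claim that $S\oplus T$ is \emph{not} hypercyclic: if $(x,y)$ were a hypercyclic vector, then in particular the first coordinate $\mathrm{Orb}(S,x)=\{S^n x\}$ would be dense in $X$; but every nonzero $x$ has $\|S^n x\|\to\infty$, so the orbit stays outside a ball around $0$ for large $n$ and cannot be dense (and if $x=0$ the orbit is $\{0\}$). Hence no hypercyclic vector exists. Alternatively one may quote that a hypercyclic operator cannot have a nontrivial invariant subspace on which it is bounded below in the sense above, but the direct argument is cleaner.

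For the claim that $S\oplus T$ \emph{is} a J-class operator, I would exhibit a J-class vector of the form $0\oplus y$ where $y\in Y$ is a hypercyclic vector for $T$. I must show $J(0\oplus y)=X\oplus Y$. Fix an arbitrary target $(u,v)\in X\oplus Y$. The idea is to perturb only the first coordinate: take $x_n:=S^{-k_n}u$ for a suitable strictly increasing sequence $k_n$, and $y_n:=y$ for all $n$. Then $S^{k_n}x_n=u$ exactly, while $x_n=S^{-k_n}u\to 0$ because $\|S^{-k_n}\|\to 0$; and since $y$ is hypercyclic for $T$, one can choose the sequence $\{k_n\}$ so that simultaneously $T^{k_n}y\to v$. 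This simultaneity is the one point needing a small argument: the hypercyclic vector $y$ has $\{T^k y:k\ge 0\}$ dense, so the set $\{k: \|T^k y - v\|<1/m\}$ is infinite for each $m$, and we may extract a strictly increasing sequence $k_n$ along which $T^{k_n}y\to v$; no interaction with the first coordinate is needed since $x_n\to 0$ regardless of which subsequence we pick. Then $(x_n,y_n)\to(0,y)$ and $(S\oplus T)^{k_n}(x_n,y_n)=(u,T^{k_n}y)\to(u,v)$, so $(u,v)\in J(0\oplus y)$. Since $(u,v)$ was arbitrary, $J(0\oplus y)=X\oplus Y$, and $0\oplus y\neq 0$, so $S\oplus T$ is a J-class operator.

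The main obstacle is essentially bookkeeping rather than depth: one must make sure the chosen sequence $\{k_n\}$ can be taken strictly increasing and used simultaneously in both coordinates, and one must justify the growth/decay estimates for $S$ and $S^{-1}$ from $\sigma(S)\subseteq\{|\lambda|>1\}$ via the spectral radius formula. Both are routine. A secondary subtlety worth a sentence is that we genuinely need $Y$ (equivalently $X\oplus Y$) separable for a hypercyclic $T$ to exist at all, which is given in the hypotheses.
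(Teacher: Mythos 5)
The paper does not prove this lemma at all---it is imported verbatim from Costakis--Manoussos---but your argument is correct and is essentially the original one: the J-class vector is $0\oplus y$ with $y$ hypercyclic for $T$, the first coordinate is hit exactly by setting $x_n=S^{-k_n}u$, and non-hypercyclicity follows from the exponential growth of $\|S^nx\|$ on nonzero vectors. The only thing to fix is the slip in your growth estimate: you want $\|x\|=\|S^{-n}S^{n}x\|\le\|S^{-n}\|\,\|S^{n}x\|$ combined with $\|S^{-n}\|\le C\rho^{-n}$ (the spectral radius of $S^{-1}$ is $<1$), yielding $\|S^{n}x\|\ge c\rho^{n}\|x\|$; as written, your inequality bounds $\|S^{-n}x\|$ from below rather than $\|S^{n}x\|$, and the phrase ``$\|S^{-n}x\|\to\infty$'' should read ``$\|S^{n}x\|\to\infty$''.
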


\begin{lemma}\label{5}
For any positive integer $n$, let
$A:\mathbb{C}^n\rightarrow\mathbb{C}^n$ be a linear map. Then $A$ is
not a J-class operator.
\end{lemma}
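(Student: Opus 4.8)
The plan is to argue by contradiction: assume $A$ is a J-class operator, so that there is a non-zero $x\in\mathbb{C}^n$ with $J(x)=\mathbb{C}^n$, and extract a contradiction from the spectral structure of $A$. Since $\mathbb{C}^n$ is a finite-dimensional Hilbert space, $\sigma(A)=\sigma_p(A)$ is a non-empty finite set and $\sigma_p(A^*)=\overline{\sigma(A)}$. Because $\mathbb{D}$ is conjugate-symmetric, Lemma \ref{1} forces $\sigma(A)\subseteq\{\lambda:|\lambda|\ge 1\}$ (otherwise we are done already). It then remains to handle two cases: (i) $\sigma(A)\subseteq\{\lambda:|\lambda|>1\}$, and (ii) $\sigma(A)$ meets the unit circle.

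In case (i) the operator $A$ is invertible and the spectral radius of $A^{-1}$ equals $\max\{|\lambda|^{-1}:\lambda\in\sigma(A)\}<1$, so $\|A^{-k}\|\to 0$. Hence $\|A^{k}v\|\ge\|v\|/\|A^{-k}\|$ for every $v$, and for any $x_n\to x$ and any strictly increasing $(k_n)$ we get $\|A^{k_n}x_n\|\ge\|x_n\|/\|A^{-k_n}\|\to\infty$, since $\|x_n\|\to\|x\|>0$. Thus no such sequence can converge, so $J(x)=\emptyset\neq\mathbb{C}^n$, a contradiction.

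In case (ii) I would fix $\mu\in\sigma(A)$ with $|\mu|=1$ and pass to a scalar problem via a left eigenvector: since $\sigma(A^{\mathsf T})=\sigma(A)$, there is a non-zero continuous linear functional $\phi:\mathbb{C}^n\to\mathbb{C}$ with $\phi\circ A=\mu\phi$, hence $\phi(A^kv)=\mu^k\phi(v)$ and $|\phi(A^kv)|=|\phi(v)|$ for all $k$. Now if $y\in J(x)$, choose $x_n\to x$ and $k_n\nearrow\infty$ with $A^{k_n}x_n\to y$; by continuity $\phi(A^{k_n}x_n)\to\phi(y)$, while $|\phi(A^{k_n}x_n)|=|\phi(x_n)|\to|\phi(x)|$. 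So $|\phi(y)|=|\phi(x)|$ for every $y\in J(x)$, i.e. $J(x)\subseteq\{y:|\phi(y)|=|\phi(x)|\}$. As $\phi$ is a non-zero functional it is onto $\mathbb{C}$, so this set is a proper subset of $\mathbb{C}^n$ (it misses any $y$ with $|\phi(y)|=|\phi(x)|+1$), contradicting $J(x)=\mathbb{C}^n$.

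The only genuinely delicate point is case (ii): eigenvalues on the unit circle may carry non-trivial Jordan blocks, so a crude norm-growth estimate cannot exclude J-class behaviour there (orbits may grow polynomially, and one might hope this "fills out" $\mathbb{C}^n$); the device that resolves it is to transport the problem to a one-dimensional one through a left eigenvector and use continuity of $\phi$ to pass to the limit in the definition of $J(x)$. I note that Lemma \ref{1} is convenient but not essential: the same functional argument covers an eigenvalue $\mu$ with $|\mu|<1$ as well, since then $|\phi(A^{k_n}x_n)|=|\mu|^{k_n}|\phi(x_n)|\to 0$ forces $J(x)\subseteq\ker\phi\subsetneq\mathbb{C}^n$.
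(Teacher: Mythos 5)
Your argument is correct, but there is nothing in this paper to compare it with: Lemma \ref{5} is quoted verbatim from Costakis--Manoussos (``The following lemmas are taken from article \cite{G.Costakis and A.Manoussos}'') and no proof is given here. Your proof is a valid self-contained replacement. The reduction via Lemma \ref{1} to $\sigma(A)\subseteq\{|\lambda|\geq 1\}$ is sound, since in finite dimensions $\sigma_p(A^*)$ is the conjugate of $\sigma(A)$ and $\mathbb{D}$ is conjugation-invariant; the expansive case is handled correctly by $\|A^{k}v\|\geq\|v\|/\|A^{-k}\|$ together with $\|A^{-k}\|\to 0$; and the left-eigenvector device for a unimodular eigenvalue $\mu$ is exactly the right tool, since it collapses the problem to the scalar identity $|\phi(A^{k}v)|=|\phi(v)|$ and traps $J(x)$ inside the level set $\{y:|\phi(y)|=|\phi(x)|\}$, which is proper because $\phi$ is surjective. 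You are also right that this is the delicate case: Jordan blocks over unimodular eigenvalues produce polynomially growing orbits, so no norm estimate alone can finish it. Your closing observation that the functional argument subsumes the case $|\mu|<1$ (giving $J(x)\subseteq\ker\phi$) means the appeal to Lemma \ref{1} could be dropped entirely, yielding a two-case proof: some eigenvalue of modulus at most one gives a proper affine-type constraint on $J(x)$, and all eigenvalues of modulus greater than one give $J(x)=\emptyset$.
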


It is convenient to cite in full length a result of Apostol and
Morrel \cite{HER1}. Let $\Gamma=\partial\Omega$, where $\Omega$ is
an analytic Cauchy domain, and let $L^2(\Gamma)$ be the Hilbert
space of (equivalent classes of) complex functions on $\Gamma$ which
are square integrable with respect to (1/2$\pi$)-times the
arc-length measure on $\Gamma$; $M(\Gamma)$ will stand for the
operator defined as multiplication by $\lambda$ on $L^2(\Gamma)$.
The subspace $H^2(\Gamma)$ spanned by the rational functions with
poles outside $\overline{\Omega}$ is invariant under $M(\Gamma)$. By
$M_+(\Gamma)$ and $M_-(\Gamma)$ we shall denote the restriction of
$M(\Gamma)$ to $H^2(\Gamma)$ and its compression to
$L^2(\Gamma)\ominus H^2(\Gamma)$,respectively, i.e.
$$M(\Gamma)=\begin{matrix}\begin{bmatrix}
M_+(\Gamma)&Z\\
&M_-(\Gamma)\end{bmatrix} &
\begin{matrix}
H^2(\Gamma)\\
H^2(\Gamma)^\bot\end{matrix}\end{matrix}.
$$
\begin{definition}
$S\in B(H)$ is a simple model, if
$$S\simeq\begin{matrix}\begin{bmatrix}
S_+&*&*\\
&A&*\\
&&S_-\\
\end{bmatrix}&
\begin{matrix}
\end{matrix}\end{matrix},$$
where

(1) $\sigma(S_+),~\sigma(S_-),~\sigma(A)$\ are pairwise disjoint;

(2) A is similar to a normal operator with finite spectrum;

(3) $S_+$ is (either absent or) unitarity equivalent to
$\oplus_{i=1}^{m}M_+(\partial\Omega_i)^{(k_i)},~1\leq k_i
\leq\infty$, where $\{\Omega_i\}_{i=1}^{m}$ is a finite family of
analytic Cauchy domains with pairwise diajoint closures;

(4) $S_-$ is (either absent or) unitarity equivalent to
$\oplus_{j=1}^{n}M_-(\partial\Phi_j)^{(h_j)},~1\leq h_j \leq\infty$,
where $\{\Phi_j\}_{j=1}^{n}$ is a finite family of analytic Cauchy
domains with pairwise diajoint closures.
\end{definition}
\begin{theorem}\label{Apostol-Morrel}
The simple models are dense in $B(H)$. More precisely: Given $T\in
B(H)$ and $\epsilon>0$ there exists a simple model $S$ such that

$(1)~ \sigma(S_+)\subseteq \rho_{s-F}^{(-)}(T)\subseteq
\sigma(S_+)_{\epsilon},~\sigma(S_-)\subseteq\rho_{s-F}^{(+)}(T)\subseteq
 \sigma(S_-)_{\epsilon},$  and  $\sigma(A)\subseteq\sigma(T)_{\epsilon},$ where
$(\cdot)_{\epsilon}=\{z\in\mathbb{C};dist[z,\cdot]\leq\epsilon\}.$

$(2)~ {\rm ind}(\lambda-S)={\rm ind}(\lambda-T), \ for \ each \
\lambda\in \rho_{s-F}^{(-)}(S_+)\cup\rho_{s-F}^{(+)}(S_-).$

$(3)~ ||T-S||<\epsilon.$
\end{theorem}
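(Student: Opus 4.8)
This is a deep structure theorem from the approximation theory of Hilbert space operators; we only indicate the skeleton of the argument and refer to \cite{HER1} for the details. Fix $T\in B(H)$ and $\epsilon>0$. The plan is to build $S$ one block at a time so that its spectral picture imitates that of $T$, using finitely many perturbations, each of size at most $\epsilon/10$, so that the total error stays below $\epsilon$. \emph{Step 1 (normalizing the spectral picture).} Using the classical perturbation lemmas of Apostol --- smoothing the boundaries of $\sigma(T)$, $\sigma_e(T)$ and $\sigma_{lre}(T)$, controlling the number of components, filling a hole of $\sigma(T)\setminus\sigma_e(T)$ whenever the semi-Fredholm index permits, and reducing ${\rm min\, ind}(\lambda-T)$ to $0$ at every $\lambda\in\rho_{s-F}(T)$ --- one replaces $T$, within $\epsilon/5$, by an operator $T_1$ with finitely generated spectral picture: there are analytic Cauchy domains $\Omega_1,\dots,\Omega_m$ with pairwise disjoint closures, each with $\overline{\Omega_i}\subseteq\rho_{s-F}^{(-)}(T_1)$, ${\rm ind}(\lambda-T_1)\equiv-k_i$ on $\Omega_i$ ($1\le k_i\le\infty$) and $\rho_{s-F}^{(-)}(T_1)\subseteq(\bigcup_i\Omega_i)_\epsilon$; symmetrically analytic Cauchy domains $\Phi_1,\dots,\Phi_n$ with $\overline{\Phi_j}\subseteq\rho_{s-F}^{(+)}(T_1)$, ${\rm ind}(\lambda-T_1)\equiv h_j$ on $\Phi_j$ ($1\le h_j\le\infty$) and $\rho_{s-F}^{(+)}(T_1)\subseteq(\bigcup_j\Phi_j)_\epsilon$; and the remaining part of $\sigma(T_1)$ equals $\sigma_{lre}(T_1)$. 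These data agree with those of $T$ wherever the semi-Fredholm domains overlap.

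\emph{Step 2 (triangularizing and modelling the outer blocks).} By Apostol's triangular representation, after a unitary equivalence $T_1$ becomes a $3\times3$ upper-triangular operator matrix with diagonal entries $B_+,B_0,B_-$ relative to $H=H_+\oplus H_0\oplus H_-$, where $\sigma(B_+)=\overline{\bigcup_i\Omega_i}$, $\sigma(B_-)=\overline{\bigcup_j\Phi_j}$, $\sigma(B_0)=\sigma_{lre}(T_1)$, with ${\rm ind}(\lambda-B_+)=-k_i$ on $\Omega_i$ and ${\rm ind}(\lambda-B_-)=h_j$ on $\Phi_j$, while $B_0$ carries index $0$ throughout its semi-Fredholm domain. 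The block $B_+$ is, modulo a compact perturbation, of ``analytic'' type with essential spectrum $\bigcup_i\partial\Omega_i$ and the prescribed negative index; combining Voiculescu's noncommutative Weyl--von Neumann theorem with the function theory of the multiplication models, one shows that $B_+$ is approximately unitarily equivalent modulo the compacts to $S_+:=\bigoplus_{i=1}^m M_+(\partial\Omega_i)^{(k_i)}$, and --- there being no semi-Fredholm index obstruction --- a further norm perturbation of size $<\epsilon/5$ upgrades this to a norm approximation. Symmetrically, $B_-$ is replaced, within $\epsilon/5$, by $S_-:=\bigoplus_{j=1}^n M_-(\partial\Phi_j)^{(h_j)}$.

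\emph{Step 3 (the central block and reassembly).} Since $B_0$ is biquasitriangular with $\sigma(B_0)=\sigma_{lre}(B_0)$, it belongs to the norm-closure of the normal operators; hence $B_0$ is within $\epsilon/10$ of a normal operator, and --- approximating that operator by a diagonal one via the Weyl--von Neumann--Berg theorem and then rounding the diagonal entries to a finite net --- within a further $\epsilon/10$ of an operator $A$ unitarily equivalent to a normal operator with finite spectrum contained in $\sigma(T)_\epsilon$. Putting $S_+,A,S_-$ in place of $B_+,B_0,B_-$ and keeping the (now harmless) strictly upper-triangular corners yields a simple model $S$ with $\|T-S\|<\epsilon$, which is conclusion (3). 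Shrinking the $\Omega_i$ and $\Phi_j$ slightly and moving the finitely many points of $\sigma(A)$ off all the relevant boundaries makes $\sigma(S_+),\sigma(A),\sigma(S_-)$ pairwise disjoint, which together with the inclusions obtained in Step 1 gives (1). For (2): if $\lambda\in\rho_{s-F}^{(-)}(S_+)=\bigcup_i\Omega_i$, then $\lambda-A$ and $\lambda-S_-$ are invertible, so from the triangular form ${\rm ind}(\lambda-S)={\rm ind}(\lambda-S_+)=-k_i={\rm ind}(\lambda-T)$, the last equality by the construction of $T_1$ and the local constancy of the Fredholm index on the semi-Fredholm domain; the case $\lambda\in\rho_{s-F}^{(+)}(S_-)$ is symmetric.

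The crux of the argument is the modelling of the outer blocks in Step 2: identifying $B_+$ (resp. $B_-$), with its negative (resp. positive) index profile over the domains $\Omega_i$ (resp. $\Phi_j$), with the concrete operator $\bigoplus M_+(\partial\Omega_i)^{(k_i)}$ (resp. $\bigoplus M_-(\partial\Phi_j)^{(h_j)}$) requires both the full strength of Voiculescu's theorem and the analytic function theory exhibiting $M_+$ and $M_-$ as the universal one-sided-invertible operators with a prescribed essential spectrum; the passage from unitary equivalence modulo the compacts to a genuine norm approximation is exactly the step for which it is essential that the spectral picture was split along the semi-Fredholm index in Steps 1--2, so that the index obstruction is confined to $S_+$ and $S_-$. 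The remaining work is careful but routine perturbation theory and index bookkeeping.
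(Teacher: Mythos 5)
You should first be aware that the paper does not prove this statement at all: Theorem \ref{Apostol-Morrel} is the Apostol--Morrel ``simple models'' theorem, quoted from Herrero's monograph \cite{HER1} as a known result, so there is no internal proof to compare your sketch against. Judged on its own terms, your outline does follow the architecture of the proof in \cite{HER1} (normalize the spectral picture by small perturbations, pass to Apostol's triangular representation, model the outer blocks by $\bigoplus M_{\pm}(\partial\cdot)^{(\cdot)}$ via Voiculescu's theorem, treat the central block separately), and the index bookkeeping you give for conclusion (2) is fine.

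There is, however, one step that is wrong as stated. In Step 3 you claim that $B_0$, being biquasitriangular with $\sigma(B_0)=\sigma_{lre}(B_0)$, ``belongs to the norm-closure of the normal operators.'' The set of normal operators is already norm-closed (if $N_n\rightarrow T$ in norm then $N_n^*N_n-N_nN_n^*\rightarrow T^*T-TT^*$, which therefore vanishes), so your claim would force $B_0$ to be normal, which it is not in general; consequently the Weyl--von Neumann--Berg route you then take is not available. The correct tool is the Apostol--Foias--Voiculescu/Voiculescu characterization of the biquasitriangular operators as the norm-closure of the algebraic operators --- indeed of the operators \emph{similar} to normal operators with finite spectrum --- which is precisely why the central piece $A$ in the definition of a simple model is only required to be similar to, not unitarily equivalent to, such a normal operator. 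With that substitution, and granting the genuinely deep (and here only asserted) modelling of the outer blocks by the multiplication operators in Step 2, your sketch matches the standard argument of \cite{HER1}.
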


\begin{remark}
The central piece $A$ of the model $S$ can be replaced by many other
operators. For example, let $\Psi$ be an analytic Cauchy domain such
that
$\sigma_{lre}(T)\subseteq\Psi\subseteq\sigma_{lre}(T)_{\epsilon/8}$
and $\Sigma$ an arbitrary perfect subset of $\Psi^-$, then there
exists a simple model $S$, $||T-S||<\epsilon$ such that the Wolf
spectrum of the central piece $A$ is $\Sigma$. One can observe
\cite{HER1} section 6.1 for details.
\end{remark}

It is time to obtain the main result. Let $J(H)$ be the set of
J-class operators on separable Hilbert space $H$. Then we have the
following theorem.

\begin{theorem}
Denote by $E$ the set $\{T\in
B(H);~[\sigma_{lre}(T)\cup\rho_{s-F}^{(+)}(T)]\cap\partial\mathbb{D}\neq\emptyset,~\mathbb{D}^-\cap\rho_{s-F}^{(-)}(T)=\emptyset,~{\rm
and~there~ does~ not~ exist~ a~ component~ of}~\sigma(T)~{\rm
included~ in}~ \mathbb{D}\}$. Then $\overline{J(H)}=E$.
\end{theorem}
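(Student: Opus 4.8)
The plan is to prove the two inclusions $\overline{J(H)}\subseteq E$ and $E\subseteq\overline{J(H)}$ separately, and the key is that $E$ is defined by spectral data that behave well under small perturbations and under the unitary/similarity invariance of $J(H)$.

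\textbf{Step 1: $E$ is closed, and $J(H)\subseteq E$.} First I would check that $E$ is a closed subset of $B(H)$, using the semicontinuity properties of the various parts of the spectrum under norm perturbations (the Wolf spectrum and the index function are continuous on $\rho_{s-F}$, isolated components move continuously, etc.); this is the routine ``classical approximation'' bookkeeping. Then I would show every J-class operator lies in $E$. For the condition $\mathbb{D}^-\cap\rho_{s-F}^{(-)}(T)=\emptyset$: if $\lambda\in\mathbb{D}^-$ has $\mathrm{ind}(\lambda-T)<0$, then near $\lambda$ we get $\mathrm{dim\,Ker}(\lambda'-T)^*>0$ for $\lambda'$ in the open disk, i.e. $\sigma_p(T^*)\cap\mathbb{D}\neq\emptyset$, so Lemma~\ref{1} forbids $T$ from being J-class. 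For the ``no component of $\sigma(T)$ inside $\mathbb{D}$'' condition, suppose $\sigma_0$ is such a component; by the Riesz decomposition write $T=T_1\oplus T_2$ with $\sigma(T_1)=\sigma_0\subseteq\mathbb{D}$ and note a J-class operator restricted to a reducing summand forces the summand's behaviour — combined with Lemma~\ref{5} (finite-dimensional case) and the power-boundedness forced on the $T_1$-block by an argument like the one in Lemma~\ref{1}, one derives a contradiction. Finally the requirement that $[\sigma_{lre}(T)\cup\rho_{s-F}^{(+)}(T)]\cap\partial\mathbb{D}\neq\emptyset$ must be shown necessary: if this intersection is empty, then $\partial\mathbb{D}$ meets $\sigma(T)$ only in points of $\rho_{s-F}^{(-)}$ or in isolated non-essential points or not at all, and in each of those cases a perturbation-free spectral argument shows $J(x)$ cannot be all of $H$ (the orbit of any nearby vector stays in a proper invariant-modulo-compact region).

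\textbf{Step 2: $E\subseteq\overline{J(H)}$ via simple models.} Fix $T\in E$ and $\epsilon>0$; I would apply Theorem~\ref{Apostol-Morrel} (with the Remark allowing the central piece's Wolf spectrum to be prescribed as a perfect set inside an analytic Cauchy domain squeezed around $\sigma_{lre}(T)$) to get a simple model $S$ with $\|T-S\|<\epsilon$, $S\simeq S_+\oplus_{*} A\oplus_{*} S_-$. Because $T\in E$, the index data force $\rho_{s-F}^{(-)}(T)\cap\mathbb{D}^-=\emptyset$, so $\sigma(S_+)$ stays away from $\mathbb{D}^-$; I can further push $\sigma(S_+)$ entirely outside $\mathbb{D}^-$ so that the $S_+$ block has spectrum in $\{|\lambda|>1\}$, and absorb small eigenvalue-clusters appropriately. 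The condition $[\sigma_{lre}(T)\cup\rho_{s-F}^{(+)}(T)]\cap\partial\mathbb{D}\neq\emptyset$ guarantees that either the central piece $A$ (whose prescribed perfect Wolf spectrum can be arranged to meet $\partial\mathbb{D}$) or the $S_-$ block (built from $M_-(\partial\Phi_j)$'s, i.e. Cowen–Douglas-type adjoints whose Cauchy domains $\Phi_j$ can be taken to cross $\partial\mathbb{D}$) contains a hypercyclic direct summand: this is exactly Theorem~\ref{4} together with Remark~\ref{re}. The ``no component of $\sigma(T)$ inside $\mathbb{D}$'' clause is what lets me make these arrangements simultaneously without trapping spectrum in the open disk. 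With $S$ rewritten (after a similarity, which preserves the J-class property by the Remark after the Definition) as $S'\oplus H_0$ where $\sigma(S')\subseteq\{|\lambda|>1\}$ and $H_0$ is hypercyclic, Lemma~\ref{2} shows $S$ is J-class; since $\epsilon$ was arbitrary, $T\in\overline{J(H)}$.

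\textbf{Step 3: assembling the pieces and the main obstacle.} The structural work is in Step 2: correctly distributing the eigenvalue cluster in $A$ and the normal finite-spectrum piece so that the disjointness conditions (1) of the simple model are preserved while still achieving (a) $\sigma(S_+)\subseteq\{|\lambda|>1\}$, (b) a hypercyclic summand coming from the part of the spectrum meeting $\partial\mathbb{D}$, and (c) no spectrum stranded inside $\mathbb{D}$. I expect the main obstacle to be the necessity direction's last case in Step 1 — showing that $[\sigma_{lre}(T)\cup\rho_{s-F}^{(+)}(T)]\cap\partial\mathbb{D}=\emptyset$ genuinely prevents $T$ from being a limit of J-class operators — because here one cannot just perturb; one must combine the closedness of $E$ with a direct argument that an operator whose spectrum meets $\partial\mathbb{D}$ only ``from the wrong side'' has all nearby operators failing the J-class property, which requires the careful spectral-picture analysis (via Apostol–Morrel-type normal forms applied to the approximants themselves) rather than a single perturbation estimate.
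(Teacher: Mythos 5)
Your overall strategy coincides with the paper's (show $E$ is norm-closed and $J(H)\subseteq E$ via Lemma \ref{1}, the Riesz decomposition and Lemma \ref{5}; then for the converse approximate $T\in E$ by an operator of the form (spectrum outside $\mathbb{D}^-$) $\oplus$ (hypercyclic) and invoke Lemma \ref{2}), but Step 2 has a genuine gap at its central point. The condition $[\sigma_{lre}(T)\cup\rho_{s-F}^{(+)}(T)]\cap\partial\mathbb{D}\neq\emptyset$ splits into two cases, and in the case $\rho_{s-F}^{(+)}(T)\cap\partial\mathbb{D}=\emptyset$ but $\sigma_{lre}(T)\cap\partial\mathbb{D}\neq\emptyset$ your construction produces no hypercyclic summand: the only part of the simple model living over $\partial\mathbb{D}$ is then the central piece $A$, which by definition is similar to a normal operator with finite spectrum (or, using the Remark, an operator with prescribed perfect Wolf spectrum and index $0$ off it). Such an operator is not a Cowen--Douglas operator --- it has no eigenspaces of constant positive dimension spanning the space --- so Theorem \ref{4} and Remark \ref{re} simply do not apply to it, and ``arranging its Wolf spectrum to meet $\partial\mathbb{D}$'' does not make it hypercyclic. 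This is precisely where the paper inserts two steps you are missing: first the Similarity Orbit Theorem is used to add small perturbations $C_{i,j}$ that open up disks $\mathbb{D}_{i,j}\subseteq\sigma_{lre}$ crossing $\partial\mathbb{D}$ on which the index becomes $+1$ (index creation inside the Wolf spectrum), and then Herrero's Corollary 2.4 is used to further perturb those pieces into genuine Cowen--Douglas operators whose domains intersect the unit circle, at which point Theorem \ref{4} applies. Without an index-creation device of this kind the ``hypercyclic summand'' in your decomposition $S'\oplus H_0$ need not exist.

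Two smaller issues: you do not say what happens to the finite normal spectrum of $A$ that may fall inside $\mathbb{D}$ (the paper arranges in the first approximation that $\rho_{s-F}^{(0)}(S)\cap\sigma(S)$ is finite and lies outside $\mathbb{D}^-$, which is needed both to keep $S\in E$ and to apply Lemma \ref{2}); and your Step 3 worry that one must show ``all nearby operators fail the J-class property'' in the last case of the necessity direction is unfounded --- since $E$ is closed it suffices to show that a J-class operator itself lies in $E$, which is exactly how the paper disposes of that case (via Lemmas \ref{1} and \ref{5} applied to the operator, not to its approximants).
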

\begin{proof}

First, we will show that $\overline{J(H)}\subseteq E$. It follows
from the stability properties of semi-Frodholm operators that $E$ is
closed in norm topology. Hence it suffices to show that
$J(H)\subseteq E$.

Choose any $T\in E^c$, then

\ding {192} there exists a component of $\sigma(T)$ contained in
$\mathbb{D}$, or

\ding {193} there does not exist a component of $\sigma(T)$
contained in $\mathbb{D}$, but there exists a point
$\lambda_0\in\mathbb{D}^-$ such that ${\rm ind}(\lambda_0-T)<0$, or

\ding {194} there does not exist a component of $\sigma(T)$
contained in $\mathbb{D}$ and
$\mathbb{D}^-\cap\rho_{s-F}^{(-)}(T)=\emptyset$, but
$[\sigma_{lre}(T)\cup\rho_{s-F}^{(+)}(T)]\cap\partial\mathbb{D}=\emptyset$.

For the first case, let $\sigma$ be the component of $\sigma(T)$
which contained in $\mathbb{D}$, then from Riesz's decomposition
theorem, we have
\begin{eqnarray*}T&=&\begin{matrix}\begin{bmatrix}
T_1\\
&T_2\\
\end{bmatrix}&
\begin{matrix}
H_1\\
H_2\end{matrix}\end{matrix}\\
&=&\begin{matrix}\begin{bmatrix}
T_1&*\\
&\widetilde{T_2}\\
\end{bmatrix}&
\begin{matrix}
H_1\\
{H_1}^\bot\end{matrix}\end{matrix}\\
&\sim&\begin{matrix}\begin{bmatrix}
T_1\\
&\widetilde{T_2}\\
\end{bmatrix}&
\begin{matrix}
H_1\\
{H_1}^\bot\end{matrix}\end{matrix},\end{eqnarray*} where the symbol
"$\sim$" denotes similarity between operators, $\sigma(T_1)=\sigma$
and
$\sigma(T_2)=\sigma(\widetilde{T_2})=\sigma(T)\setminus\sigma(T_1)$.

Since $r(T_1)<1$ ($r(T_1)$ is the spectrum radius of $T_1$), for
$\epsilon>0$ satisfying $r(T_1)+\epsilon<1$, there exists $N$ such
that $||T_1^n||<(r(T_1)+\epsilon)^n$ for $n\geq N$, i.e.
$\lim\limits_{n\rightarrow\infty}||T_1^n||=0$. Therefore $T$ cannot
be a J-class operator.

For the second case, there must exist a point
$\lambda_0\in\mathbb{D}$ such that ${\rm ind}(\lambda_0-T)<0$. Then
from Lemma \ref{1}, $T$ can not be a J-class operator.

For the third case, either
$\partial\mathbb{D}\subseteq\rho_{s-F}^{(0)}(T)\cap\sigma(T)$, or
there exist only finite isolate normal eigenvalue of $\sigma(T)$ on
$\partial\mathbb{D}$ and $\sigma(T)\cap\mathbb{D}=\emptyset$. Then
from \ref{1} and \ref{5}, we know that $T$ can not be a J-class
operator.

Hence $T\in J(H)^c$. Obviously, $\overline{J(H)}\subseteq E$.

Second, we will show that $E\subseteq \overline{J(H)}$. More
precisely, for any $T\in E$, given any $\epsilon>0$, there exists an
operator $K$ such that $||K||<\epsilon$ and $T+K\in J(H)$.

Choose any $T\in E$. By a standard argument of approximation, there
exists an operator $S\in E$ such that $||T-S||<\delta_1$,
$\sigma(S)$ has only finite components,
$\rho_{s-F}(S)\subseteq\rho_{s-F}(T)$ and ${\rm ind}(\lambda-S)={\rm
ind}(\lambda-T)$ for all $\lambda\in\rho_{s-F}(S)$,
$\sigma_{lre}(S)$ is the closure of an analytic Cauchy domain
$\Omega$, $\rho_{s-F}^{(0)}(S)\cap\sigma(S)$ has only finite points
and $\rho_{s-F}^{(0)}(S)\cap\sigma(S)\subseteq{\mathbb{D}^-}^c$.

Let $\sigma_1$ be the union of the components $\sigma$ of
$\sigma(S)$ which satisfy
$\sigma\cap\partial\mathbb{D}\neq\emptyset$,
$\rho_{s-F}^{(+)}(S)\cap\sigma\cap\partial\mathbb{D}=\emptyset$ and
$\sigma\cap\rho_{s-F}^{(-)}(S)\neq\emptyset$; let $\sigma_2$ be the
union of the components $\sigma$ of $\sigma(S)$ which satisfy
$\rho_{s-F}^{(+)}(S)\cap\sigma\cap\partial\mathbb{D}\neq\emptyset$
and $\sigma\cap\rho_{s-F}^{(-)}(S)\neq\emptyset$; let
$\sigma_3,~\sigma_4,\cdots,\sigma_m$ satisfy
$\sigma_i\cap\partial\mathbb{D}\neq\emptyset$,
$\rho_{s-F}^{(+)}(S)\cap\sigma_i\cap\partial\mathbb{D}=\emptyset$
and $\sigma_i\cap\rho_{s-F}^{(-)}(S)=\emptyset$; let
$\sigma_{m+1},~\sigma_{m+2},\cdots,\sigma_{n}$ satisfy
$\rho_{s-F}^{(+)}(S)\cap\sigma_i\cap\partial\mathbb{D}\neq\emptyset$
and $\sigma_i\cap\rho_{s-F}^{(-)}(S)=\emptyset$; let
$\sigma=\sigma(S)\backslash\bigcup_{i=1}^n\sigma_i$, then
$\sigma\cap\mathbb{D}^-=\emptyset$.

According to Riesz's decomposition theorem, we have
\begin{eqnarray*}S&=&\begin{matrix}\begin{bmatrix}
S_1\\
&S_2\\
&&S_3\\
&&&S_4\\
&&&&S_5\\
\end{bmatrix}&
\begin{matrix}
H(S,\sigma_1)\\
H(S,\sigma_2)\\
H(S,\sigma_{3})\dot{+}
\cdots\dot{+} H(S,\sigma_m)\\
H(S,\sigma_{m+1})\dot{+}
\cdots\dot{+} H(S,\sigma_{n})\\
H(S,\sigma)\end{matrix}\end{matrix},\\S_3&=&
\begin{matrix}\begin{bmatrix}
S_{3,3}\\
&\ddots\\
&&S_{3,m}\\
\end{bmatrix}&
\begin{matrix}
H(S,\sigma_3)\\
\vdots\\H(S,\sigma_m)\end{matrix}\end{matrix},\\~S_4&=&
\begin{matrix}\begin{bmatrix}
S_{4,m+1}\\
&\ddots\\
&&S_{4,n}\\
\end{bmatrix}&
\begin{matrix}
H(S,\sigma_{m+1})\\
\vdots\\
H(S,\sigma_{n})\end{matrix}\end{matrix}.
\end{eqnarray*}

According to theorem \ref{Apostol-Morrel}, there exists $A_1\in
B(H(S,\sigma_1)),~A_2\in B(H(S,\sigma_2))$,
$||A_1||<\delta_2,~||A_2||<\delta_2$ such that $S_1+A_1,~S_2+A_2$
are simple models, $S_1+A_1,~S_2+A_2\in E$, $\sigma_{lre}(S_1+A_1)$,
$\sigma_{lre}(S_2+A_2)$ are the closure of an analytic Cauchy domain
and for any component $\sigma$ of $\sigma(S_1+A_1)$ (or
$\sigma(S_2+A_2)$) satisfying
$\sigma\cap\partial\mathbb{D}\neq\emptyset$,
$\rho_{s-F}^{(-)}(S_1+A_1)\cap\sigma=\emptyset$ (or
$\rho_{s-F}^{(-)}(S_2+A_2)\cap\sigma=\emptyset$).

Let
$\sigma_{1,1},\ldots,\sigma_{1,k},\sigma_{1,k+1},\ldots,\sigma_{1,k+p}$
be the components of $\sigma(S_1+A_1)$ such that
$\sigma_{1,i}\cap\partial\mathbb{D}\neq\emptyset,~1\leq i\leq k+p$,
where
$\rho_{s-F}^{(+)}(S_1+A_1)\cap\sigma_{1,i}\cap\partial\mathbb{D}=\emptyset,~1\leq
i\leq k$,
$\rho_{s-F}^{(+)}(S_1+A_1)\cap\sigma_{1,i}\cap\partial\mathbb{D}\neq\emptyset,~k+1\leq
i\leq k+p$; let
$\sigma_{1,k+p+1}=\sigma(S_1+A_1)\backslash\bigcup_{i=1}^{k+p}\sigma_{1,i}$,
then $\sigma_{1,k+p+1}\cap\mathbb{D}^-=\emptyset$; let
$\sigma_{2,1},\ldots,\sigma_{2,l},\sigma_{2,l+1},\ldots,\sigma_{2,l+q}$
be the components of $\sigma(S_2+A_2)$ such that
$\sigma_{2,i}\cap\partial\mathbb{D}\neq\emptyset,~1\leq i\leq l+q$,
where
$\rho_{s-F}^{(+)}(S_2+A_2)\cap\sigma_{2,i}\cap\partial\mathbb{D}=\emptyset,~1\leq
i\leq l$,
$\rho_{s-F}^{(+)}(S_2+A_2)\cap\sigma_{2,i}\cap\partial\mathbb{D}\neq\emptyset,~l+1\leq
i\leq l+p$; let
$\sigma_{2,l+q+1}=\sigma(S_2+A_2)\backslash\bigcup_{i=1}^{l+q}\sigma_{2,i}$
then $\sigma_{2,l+q+1}\cap\mathbb{D}^-=\emptyset$.

By Riesz's decomposition theorem,
\begin{eqnarray*}S_1+A_1&=&\begin{matrix}\begin{bmatrix}
S_{1,1}\\
&\ddots\\
&&S_{1,k}\\
&&&S_{1,k+1}\\
&&&&\ddots\\
&&&&&S_{1,k+p}\\
&&&&&&S_{1,k+p+1}\\
\end{bmatrix}&
\begin{matrix}
H(S_1+A_1,\sigma_{1,1})\\
\vdots\\
H(S_1+A_1,\sigma_{1,k})\\
H(S_1+A_1,\sigma_{1,k+1})\\
\vdots\\
H(S_1+A_1,\sigma_{1,k+p})\\
H(S_1+A_1,\sigma_{1,k+p+1})\\
\end{matrix}\end{matrix},\\S_2+A_2&=&\begin{matrix}\begin{bmatrix}
S_{2,1}\\
&\ddots\\
&&S_{2,l}\\
&&&S_{2,l+1}\\
&&&&\ddots\\
&&&&&S_{2,l+q}\\
&&&&&&S_{2,l+q+1}\\
\end{bmatrix}&
\begin{matrix}
H(S_2+A_2,\sigma_{2,1})\\
\vdots\\
H(S_2+A_2,\sigma_{2,l})\\
H(S_2+A_2,\sigma_{2,l+1})\\
\vdots\\
H(S_2+A_2,\sigma_{2,l+q})\\
H(S_2+A_2,\sigma_{2,l+q+1})\\
\end{matrix}\end{matrix}.
\end{eqnarray*}

Next we deal with the operators
$S_{1,1},\ldots,S_{1,k},S_{2,1},\ldots,S_{2,l},S_{3,3},\ldots,S_{3,m}$.

Let
$\mathbb{D}_{1,1},\ldots,\mathbb{D}_{1,k},~\mathbb{D}_{2,1},\cdots,\mathbb{D}_{2,l},~\mathbb{D}_{3,3},\cdots,\mathbb{D}_{3,m}$
be a finite family of pairwise disjoint open disks included in
$\sigma_{lre}(S_1+A_1)$, $\sigma_{lre}(S_2+A_2)$ and
$\sigma_{lre}(S_3)$ respectively such that
$\mathbb{D}_{i,j}\cap\partial\mathbb{D}\neq\emptyset$ and
$\mathbb{D}_{1,i}\subseteq\sigma_{1,i}~(i=1,2,\cdots,k)$,
$\mathbb{D}_{2,i}\subseteq\sigma_{2,i}~(i=1,2,\cdots,l)$,
$\mathbb{D}_{3,i}\subseteq\sigma_{i}~(i=3,\cdots,m)$.

By using the Similarity Orbit Theorem \cite{HER}, there exist
operators $C_{1,1},\ldots,C_{1,k}$, $C_{2,1},\ldots,C_{2,l}$,
$C_{3,3},\ldots,C_{3,m}$, $||C_{i,j}||<\delta_3$ such that
$S_{1,i}+C_{1,i}\in E,~1\leq i\leq k$, $S_{2,i}+C_{2,i}\in E,~1\leq
i\leq l$, $S_{3,i}+C_{3,i}\in E,~3\leq i\leq m$ and for $1\leq i\leq
k$
$\rho_{s-F}(S_{1,i}+C_{1,i})=\rho_{s-F}(S_{1,i})\cup\mathbb{D}_{1,i}$,
${\rm ind}(S_{1,i}+C_{1,i}-\lambda)={\rm ind}(S_{1,i}-\lambda)$ for
all $\lambda\in\rho_{s-F}(S_{1,i})$, ${\rm
ind}(S_{1,i}+C_{1,i}-\lambda)=1$ for $\lambda\in\mathbb{D}_{1,i}$;
for $1\leq i\leq l$,
$\rho_{s-F}(S_{2,i}+C_{2,i})=\rho_{s-F}(S_{2,i})\cup\mathbb{D}_{2,i}$,
${\rm ind}(S_{2,i}+C_{2,i}-\lambda)={\rm ind}(S_{2,i}-\lambda)$ for
all $\lambda\in\rho_{s-F}(S_{2,i})$, ${\rm
ind}(S_{2,i}+C_{2,i}-\lambda)=1$ for $\lambda\in\mathbb{D}_{2,i}$;
for $3\leq i\leq m$,
$\rho_{s-F}(S_{3,i}+C_{3,i})=\rho_{s-F}(S_{3,i})\cup\mathbb{D}_{3,i}$,
${\rm ind}(S_{3,i}+C_{3,i}-\lambda)={\rm ind}(S_{3,i}-\lambda)$ for
all $\lambda\in\rho_{s-F}(S_{3,i})$, ${\rm
ind}(S_{3,i}+C_{3,i}-\lambda)=1$ for $\lambda\in\mathbb{D}_{3,i}$.

For $S_{1,1}+C_{1,1},\ldots,S_{1,k}+C_{1,k}$, $S_{1,k+1},\ldots,$
$S_{1,k+p}$, $S_{2,1}+C_{2,1},$ $\ldots,S_{2,l}+C_{2,l}$,
$S_{2,l+1},\ldots,S_{2,l+q}$,
$S_{3,3}+C_{3,3},\ldots,S_{3,m}+C_{3,m},S_{4,m+1},\ldots,S_{4,n}$,
from \cite{Her1} (Corollary 2.4 in it), there exists operators
$D_{1,i},~1\leq i\leq k+p$, $D_{2,i},~1\leq i\leq l+q$,
$D_{3,i},~3\leq i\leq m$, $D_{4,i},~m+1\leq i\leq n$,
$||D_{i,j}||<\delta_4$ such that $S_{1,i}+C_{1,i}+D_{1,i},~~1\leq
i\leq k$, $S_{1,i}+D_{1,i},~k+1\leq i\leq k+p$,
$S_{2,i}+C_{2,i}+D_{2,i},~1\leq i\leq l$, $S_{2,i}+D_{2,i},~l+1\leq
i\leq l+q$, $S_{3,i}+C_{3,i}+D_{3,i},~3\leq i\leq m$,
$S_{4,i}+D_{4,i},~m+1\leq i\leq n$ are all Cowen-Douglas operators
with their domains intersect the unit circle, hence from theorem
\ref{4}, they are all hypercyclic.

Let \begin{eqnarray*} &K&=S-T+\\
&&\begin{matrix}\begin{bmatrix}
A_1+C_1+D_1\\
&A_2+C_2+D_2\\
&&C_3+D_3\\
&&&D_4\\
&&&&0\\
\end{bmatrix}&
\begin{matrix}
H(S,\sigma_1)\\
H(S,\sigma_2)\\
H(S,\sigma_{3})\dot{+}
\cdots\dot{+} H(S,\sigma_m)\\
H(S,\sigma_{m+1})\dot{+}
\cdots\dot{+} H(S,\sigma_{n})\\
H(S,\sigma)\end{matrix}\end{matrix},
\end{eqnarray*} where \begin{eqnarray*} C_1&=&
\begin{matrix}\begin{bmatrix}
C_{1,1}\\
&\ddots\\
&&C_{1,k}\\
&&&0\\
&&&&\ddots\\
&&&&&0\\
&&&&&&0\\
\end{bmatrix}&
\begin{matrix}
H(S_1+A_1,\sigma_{1,1})\\
\vdots\\
H(S_1+A_1,\sigma_{1,k})\\
H(S_1+A_1,\sigma_{1,k+1})\\
\vdots\\
H(S_1+A_1,\sigma_{1,k+p})\\
H(S_1+A_1,\sigma_{1,k+p+1})\\\end{matrix}\end{matrix},\\
 C_2&=&
\begin{matrix}\begin{bmatrix}
C_{2,1}\\
&\ddots\\
&&C_{2,l}\\
&&&0\\
&&&&\ddots\\
&&&&&0\\
&&&&&&0\\
\end{bmatrix}&
\begin{matrix}
H(S_2+A_2,\sigma_{2,1})\\
\vdots\\
H(S_2+A_2,\sigma_{2,l})\\
H(S_2+A_2,\sigma_{2,l+1})\\
\vdots\\
H(S_2+A_2,\sigma_{2,l+q})\\
H(S_2+A_2,\sigma_{2,l+q+1})\\\end{matrix}\end{matrix},\\
C_3&=&
\begin{matrix}\begin{bmatrix}
C_{3,3}\\
&\ddots\\
&&C_{3,m}\\
\end{bmatrix}&
\begin{matrix}
H(S,\sigma_3)\\
\vdots\\H(S,\sigma_m)\end{matrix}\end{matrix},\\
D_1&=&
\begin{matrix}\begin{bmatrix}
D_{1,1}\\
&\ddots\\
&&C_{1,k+p}\\
&&&0\\
\end{bmatrix}&
\begin{matrix}
H(S_1+A_1,\sigma_{1,1})\\
\vdots\\
H(S_1+A_1,\sigma_{1,k+p})\\
H(S_1+A_1,\sigma_{1,k+p+1})\\\end{matrix}\end{matrix},\\
D_2&=&
\begin{matrix}\begin{bmatrix}
D_{2,1}\\
&\ddots\\
&&D_{2,l+q}\\
&&&0\\
\end{bmatrix}&
\begin{matrix}
H(S_2+A_2,\sigma_{2,1})\\
\vdots\\
H(S_2+A_2,\sigma_{2,l+q})\\
H(S_2+A_2,\sigma_{2,l+q+1})\\\end{matrix}\end{matrix},\\
D_3&=&
\begin{matrix}\begin{bmatrix}
D_{3,3}\\
&\ddots\\
&&D_{3,m}\\
\end{bmatrix}&
\begin{matrix}
H(S,\sigma_3)\\
\vdots\\H(S,\sigma_m)\end{matrix}\end{matrix},\\
D_4&=&
\begin{matrix}\begin{bmatrix}
D_{4,m+1}\\
&\ddots\\
&&C_{4,n}\\
\end{bmatrix}&
\begin{matrix}
H(S,\sigma_{m+1})\\
\vdots\\
H(S,\sigma_{n})\end{matrix}\end{matrix},
\end{eqnarray*} then if $\delta_1,\delta_2,\delta_3,\delta_4$ are chosen small enough, we have $||K||<\epsilon$ and from lemma \ref{2}, $T+K\in J(H)$.
Hence $E\subseteq \overline{J(H)}$.
\end{proof}

{\bf Acknowledgements} A large part of this article was developed
during the seminar on operator theory and dynamical system held at
the University of Jilin in China. The authors are deeply indebted to
professor Cao Yang for many inspiring discussions.


\begin{thebibliography}{000}

\bibitem{HER} C. Apostol, L. A. Fialkow, D. A. Herrero and D. Voiculescu, Approximation of Hilbert space
operators, Volume II, Boston: Pitman Publishing Inc, 1984.

\bibitem{BS} N.P.Bhatia and G.P.Szego, Stability theory of
dynamical systems, Die Grundlehren der mathematischen
Wissenschaften, Band 161 Springer-Verlag, New York-Berlin, 1970.

\bibitem{G.Costakis and A.Manoussos} G.Costakis and A.Manoussos,
J-class operators and hypercyclicity, accepted by \textit{J.Operator
Theory}.

\bibitem{Cowen} M.J.Cowen and R.G.Douglas, Complex geometry and operator theory, \textit{Acta Mathematica , Vol.
141(1)}, 1978, 187-261.


\bibitem{Her2} D.A.Herrero, Limits of Hypercyclic and Supercyclic
Operators, \textit{J. Funct. Anal. Vol.99}, 1991, 179--190.

\bibitem{Her1} D.A.Herrero, The Diagonal Entries in the Formula `Quasitriangular - Compact =
Triangular', and Restrictions of Quasitriangularity, \textit{Trans.
Amer. Math. Soc. 298}, 1986, 1-42.

\bibitem{HER1} D.A.Herrero, Approximation of Hilbert space
operators, Volume I, Second edition, Boston: Pitman Publishing Inc,
1989.

\bibitem{Hbz} Bingzhe Hou, Puyu Cui, Yang Cao, Chaos for Cowen-Douglas
operators, \textit{Proc. Amer. Math. Soc.138}, 2010, 929-936.

\end{thebibliography}
\end{document}